\newtheorem{teor}{Theorem}
\newtheorem{lemma}[teor]{Lemma}
\begin{document}

\title{Irredundant and minimal covers of finite groups}
\date{}
\author{Andrea Lucchini \\Department of Mathematics\\University of Padova (Italy) \and Martino Garonzi \\Department of Mathematics\\University of Padova (Italy)}
\maketitle
\begin{abstract}
A cover of a finite non-cyclic group $G$ is a family $\mathcal{H}$ of proper subgroups of $G$ whose union equals $G$. A cover of $G$ is called minimal if it has minimal size, and irredundant if it does not properly contain any other cover. We classify the finite non-cyclic groups all of whose irredundant covers are minimal.
\end{abstract}

\section{Introduction}

Let $G$ be a finite non-cyclic group. A cover of $G$ is a family $\mathcal{H}$ of proper subgroups of $G$ such that $G = \bigcup_{H \in \mathcal{H}} H$. Covers of groups have been studied by many authors. There are at least two interesting notions of \textit{minimality} for a cover: size-wise or inclusion-wise. Let $\mathcal{H}$ be a cover of $G$. $\mathcal{H}$ is said to be minimal (size-wise minimal) if it has minimum size among all covers of $G$. $\mathcal{H}$ is said to be irredundant (or inclusion-wise minimal) if no proper subfamily of $\mathcal{H}$ is a cover of $G$. Obviously, a minimal cover is always irredundant but the converse is not always true: for instance if $n > 2$ and $p$ is a prime, the non-trivial cyclic subgroups of the elementary abelian group ${C_p}^n$ form an irredundant cover (they pairwise intersect in the identity) but only $p+1$ proper subgroups are needed to cover the group: just lift a cover of any epimorphic image of the form $C_p \times C_p$.

It is natural to ask when these two minimality notions collapse, that is, to ask when are all irredundant covers size-wise minimal. In other words, we consider the finite non-cyclic groups all of whose irredundant covers have the same size, in short we will say that they ``admit only one-sized covers''. Such groups were considered by J. R. Rog\'erio in \cite{rogerio}. Note that if $N \unlhd G$ and $\mathcal{H}$ is an irredundant cover of $G/N$ then the family of preimages of the members of $\mathcal{H}$ via the projection $G \to G/N$ is an irredundant cover of $G$, therefore if $G$ admits only one-sized covers and $N \unlhd G$ with $G/N$ non-cyclic then $G/N$ admits only one-sized covers. J. R. Rog\'erio in \cite{rogerio} proved that a nilpotent group which admits only one-sized covers is necessarily of the form $H \times C$ where $|H|$ and $|C|$ are coprime, $C$ is cyclic and $H$ is either the quaternion group $Q_8$ or $C_p \times C_p$ for $p$ a prime, and that a solvable group which admits only one-sized covers is supersolvable. In this paper we give a complete classification of groups with only one-sized covers. The only strong tools we use for this classification are illustrated in Section \ref{tools}.

The following is our main result.

\begin{teor} \label{main}
Let $G$ be a finite non-cyclic group. Then $G$ admits only one-sized covers if and only if $G = H \times C$ where $C$ is a cyclic group of order coprime to $|H|$ and one of the following facts happens.
\begin{enumerate}
\item $H \cong C_p \times C_p$ for some prime $p$,
\item $H \cong Q_8$,
\item $H \cong C_p \rtimes C_n$ where $p$ is a prime that does not divide $n$ and the action of $C_n$ on $C_p$ is non-trivial.
\end{enumerate}
\end{teor}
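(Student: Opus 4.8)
The plan is to prove the two implications separately. Sufficiency ($\Leftarrow$) is an essentially elementary analysis of the covers of the three families, while necessity ($\Rightarrow$) is an induction on $|G|$ resting on the observation, recalled in the Introduction, that every non-cyclic quotient of a group admitting only one-sized covers again admits only one-sized covers, together with Rog\'erio's theorem and the tools of Section~\ref{tools}.

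\textbf{Sufficiency.} First I would reduce the direct product statement to the factor $H$. If $C$ is cyclic with $\gcd(|C|,|H|)=1$, then by coprimality every subgroup of $H\times C$ has the form $K_1\times K_2$ with $K_1\le H$, $K_2\le C$, and covering a pair $(h,c_0)$ with $c_0$ a generator of $C$ forces the covering subgroup to contain $C$; from this one obtains that the irredundant covers of $H\times C$ are exactly the families $\{K\times C:K\in\mathcal K\}$ with $\mathcal K$ an irredundant cover of $H$, so $H\times C$ admits only one-sized covers iff $H$ does. It then remains to treat the three choices of $H$. For $C_p\times C_p$ the $p+1$ subgroups of order $p$ partition the non-identity elements, so there is a unique cover. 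For $Q_8$ the three cyclic subgroups of order $4$ cover $Q_8$ and each contains an element of order $4$ lying in no other proper subgroup, so this is the unique irredundant cover. For $H=C_p\rtimes C_n$ with non-trivial action, let $P$ be the normal Sylow $p$-subgroup and $d>1$ the order of the image of $C_n$ in $\operatorname{Aut}(C_p)\cong C_{p-1}$; I would show that $P$ has exactly $p$ pairwise distinct complements, each containing an element (a generator of that complement) lying in no other proper subgroup, so all $p$ complements lie in every cover, and that an element of $H$ lies in no complement iff $p$ divides its order, in which case it lies in a cyclic subgroup of index $d$ (cyclic because $p\nmid n$) and can only be covered by a subgroup containing $P$; since the images in $H/P\cong C_n$ of such subgroups must cover a proper cyclic subgroup, exactly one further subgroup occurs by irredundancy, so every irredundant cover of $H$ has size $p+1$.

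\textbf{Necessity, Step 1 (solvability).} The first and, I expect, hardest step is to prove $G$ solvable. Arguing by contradiction, I would pass to the quotient of $G$ by its solvable radical and then factor out the centralizer of a minimal normal subgroup, obtaining a non-cyclic quotient $Q$ whose socle is a direct power $S^n$ of a non-abelian simple group $S$ with $Q/\operatorname{soc}(Q)$ transitive on the factors; $Q$ still admits only one-sized covers. The tools of Section~\ref{tools} on covers of (almost) simple and monolithic groups then produce in $Q$ two irredundant covers of different sizes --- for instance a minimal cover of size $\sigma(Q)$ versus the cover obtained from one conjugacy class of maximal subgroups together with enough further subgroups to cover the remaining elements (already for $A_5$ these have sizes $10$ and $11$) --- a contradiction. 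This step is the main obstacle precisely because it requires the structural input of Section~\ref{tools} rather than elementary arguments.

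\textbf{Necessity, Step 2 (the solvable case).} By Rog\'erio's theorem $G$ is then supersolvable, and if $G$ is nilpotent his classification gives cases~(1) and~(2) (times a coprime cyclic factor), so assume $G$ supersolvable and non-nilpotent and induct on $|G|$. Using the tools of Section~\ref{tools} (Tomkinson's formula) and supersolvability, $\sigma(G)=p+1$ for a prime $p$, and every non-cyclic quotient of $G$ has the same covering number. Pick a minimal normal subgroup $V\cong C_q$. If $G/V$ is non-cyclic, induction describes it as $\bar H\times\bar C$ with $\bar H$ as in (1)--(3), and I would reconstruct $G$ from the extension $1\to V\to G\to G/V\to1$, using non-nilpotency and $\sigma(G)=\sigma(G/V)$ to force either that $V$ is a central direct factor (absorbed into the cyclic part) or that $G$ is again of the stated form --- this bookkeeping is the technical heart of the solvable case. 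If instead $G/V$ is cyclic for every minimal normal subgroup $V$, then $G$ cannot have two distinct minimal normal subgroups (it would embed into a product of two cyclic groups, hence be abelian, which is already covered), so $G$ is monolithic with socle $V_0\cong C_p$; here $V_0$ must be non-central (else $G$ is abelian, hence cyclic), so $C:=C_G(V_0)$ is abelian and normal with $G/C$ cyclic of order $d\mid p-1$, $d>1$, and a short module-theoretic argument (using monolithicity) shows $C$ is cyclic and $p$ divides $|C|$ exactly once; then $V_0$ is the normal Sylow $p$-subgroup, any complement is cyclic isomorphic to $G/V_0$, and $G\cong C_p\rtimes C_n$ with non-trivial action, which is case~(3). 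Collecting all cases yields $G\cong H\times C$ with $H$ as in (1)--(3) and $C$ cyclic of coprime order, completing the proof.
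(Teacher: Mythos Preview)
Your main gap is in Step~1. You describe solvability as ``the hardest step'' and propose a reduction to almost-simple or monolithic quotients together with ad hoc constructions of irredundant covers of different sizes (citing $A_5$ as a prototype), appealing to Section~\ref{tools} for ``tools on covers of (almost) simple and monolithic groups''. But Section~\ref{tools} contains no such tools: it contains only Tomkinson's formula (Theorem~\ref{tom}) and the Bryce--Serena criterion (Theorem~\ref{serena}). The paper's argument for solvability is one line: the maximal cyclic subgroups form an irredundant cover (Lemma~\ref{osclemma}(1)), hence a cover of size $\sigma(G)$ since $G$ admits only one-sized covers, and these are abelian subgroups, so Theorem~\ref{serena} forces $G$ solvable. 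Your plan replaces this immediate deduction with an unspecified case analysis over non-abelian simple groups, which would in effect require the classification and is not what the paper does.

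Beyond that, your overall architecture diverges from the paper even in the solvable part. You invoke Rog\'erio for supersolvability and for the nilpotent case; the paper instead proves both directly (supersolvability via a primitive-quotient argument using Lemma~\ref{osclemma}(6) and Lemma~\ref{cycfrob}, the nilpotent case by counting to show every maximal subgroup of the Sylow $p$-part is cyclic). For the supersolvable non-nilpotent case you propose an induction on a minimal normal subgroup with ``bookkeeping'' left unspecified; the paper avoids induction here and instead observes that every non-central complemented chief factor has order the same prime $p$ (via Lemma~\ref{cycfrob} and $\sigma(G)=p+1$), applies Lemma~\ref{pnilp} to obtain a normal Sylow $p$-subgroup $P$, shows via a counting argument that $P$ is cyclic, and finally forces $|P|=p$ by an explicit computation with automorphisms. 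Your inductive sketch could perhaps be completed, but the reconstruction of $G$ from $G/V$ is genuinely delicate (for instance ruling out $P\cong C_{p^2}$ with a non-trivial action is not automatic from the inductive hypothesis), and you have not indicated how you would handle it.
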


Let $G$ admit only one-sized covers, and write $G = H \times C$ as in the theorem. Any irredundant cover of $G$ has the form $\{K \times C\ :\ K \in \mathcal{H}\}$ where $\mathcal{H}$ is an irredundant cover of $H$. As Rog\'erio observed, in a nilpotent group with only one-sized covers the family of maximal cyclic subgroups is the unique irredundant cover. In the non-nilpotent case $H \cong C_p \rtimes C_n$ (case (3) of the theorem) every irredundant cover of $H$ contains all the $p$-complements (there are $p$ of them, they are cyclic of order $n$) and any proper subgroup containing the centralizer of the Sylow $p$-subgroup.

\section{The main tools} \label{tools}

Let us illustrate the known results concerning covers that we will need in our proof. Let $\sigma(G)$ be the minimal size of a cover of the finite group $G$, where we put $\sigma(G) = \infty$ if $G$ is cyclic. Note that if $N \unlhd G$ then $\sigma(G) \leq \sigma(G/N)$, since a cover of $G/N$ can always be lifted to a cover of $G$. The number $\sigma(G)$ was computed by Tomkinson in \cite{tom} for solvable groups. He proved the following.

\begin{teor} \label{tom}
Let $G$ be a solvable non-cyclic group. Then $\sigma(G)$ equals $q+1$ where $q$ is the smallest order of a chief factor of $G$ with multiple complements.
\end{teor}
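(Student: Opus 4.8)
The plan is to establish the two inequalities $\sigma(G)\le q+1$ and $\sigma(G)\ge q+1$ separately, each by induction on $|G|$. The induction rests on two soft facts: (a) $\sigma(G)\le\sigma(G/N)$ for every $N\unlhd G$ (noted above), with covers of $G/N$ lifting to covers of $G$ of the same size; and (b) the chief factors of $G/N$ are precisely the chief factors $B/A$ of $G$ with $N\le A$, carrying the identical notion of complement — so, writing $q(X)$ for the quantity occurring in the statement, $q(G/N)\ge q(G)$ whenever $G/N$ is non-cyclic, and a group possessing a chief factor with multiple complements is itself non-cyclic. Fix a chief factor $H/K$ with $|H/K|=q:=q(G)$ having more than one complement in $G/K$.

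\emph{Upper bound.} If $K\ne 1$ the induction closes at once: every chief factor of $G/K$ with multiple complements is one of $G$, hence has order $\ge q$, while $H/K$ attains $q$; thus $q(G/K)=q$ and $\sigma(G)\le\sigma(G/K)=q+1$. So assume $V:=H$ is a minimal normal subgroup of $G$, elementary abelian of order $q=p^d$, with at least two complements. If $V\le Z(G)$ then $q=p$, and a second complement forces $H^1(G/V,V)=\operatorname{Hom}(G/V,C_p)\ne 0$; combining an epimorphism $G/V\twoheadrightarrow C_p$ with the projection $G=V\times M\to V$ produces $G\twoheadrightarrow C_p\times C_p$, so $\sigma(G)\le\sigma(C_p\times C_p)=p+1$. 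If $V\not\le Z(G)$, then $V$ is a faithful irreducible module for $G/C_G(V)$; for a complement $M$ one has $C_V(M)=0$, so the $V$-conjugates of $M$ form a single class of exactly $q$ subgroups, and I would show that these $q$ subgroups together with $V$ cover $G$. This holds precisely when $G=V\rtimes M$ is a Frobenius group with kernel $V$, i.e. when no non-identity element of $M$ fixes a non-zero vector of $V$. The technical heart of the upper bound is to show that the minimality of $q$ leaves no alternative to this Frobenius picture: if some $1\ne m\in M$ centralised a non-zero vector (as it necessarily does, e.g., when $p$ divides $|M|$, or when $C_G(V)\ne V$), one should either locate a minimal normal subgroup $N\ne V$ with $q(G/N)=q$ and conclude via $\sigma(G)\le\sigma(G/N)=q+1$, or extract from such an $m$ a chief factor with multiple complements of order $<q$, contradicting minimality. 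Organising this dichotomy — equivalently, proving that a fully reduced non-central configuration is either primitive-and-Frobenius or admits a useful proper quotient — is the main obstacle.

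\emph{Lower bound.} Let $\mathcal H$ be a cover of $G$ with $|\mathcal H|=\sigma(G)$ and let $N$ be a minimal normal subgroup. If every coset $Ng$ lies inside some member of $\mathcal H$ containing $N$, then the members of $\mathcal H$ that contain $N$ project onto a (proper) cover of $G/N$, giving $\sigma(G)\ge\sigma(G/N)\ge q(G/N)+1\ge q+1$ by induction; the case $G/N$ cyclic is handled separately, since then $N$ itself is a chief factor with multiple complements and one argues directly about how $\mathcal H$ must meet the $|N|$ cosets of $N$, which already — as in the model case $G=C_p\times C_p$ — requires $|N|+1$ subgroups. Otherwise some coset $Ng$ is covered only by proper cosets of proper subgroups of $N$; such a coset of the elementary abelian group $N$ cannot be covered by fewer than $p$ pieces, and one sharpens this to a global count: the members not containing $N$ must realise a full conjugacy class of complements of a well-chosen chief factor, hence are at least $q$ in number, with at least one further member needed to catch $N$ itself, so $|\mathcal H|\ge q+1$. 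Making this count exactly hit $q+1$, in tandem with the minimality of $q$ (so that the offending chief factor genuinely carries $\ge q$ complements), is the principal difficulty on this side.
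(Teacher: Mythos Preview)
The paper does not prove this theorem at all: it is quoted in Section~\ref{tools} as a known result of Tomkinson \cite{tom}, one of the two external tools the authors import. So there is no ``paper's own proof'' to compare against.

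That said, your proposal is openly a plan rather than a proof, and you yourself flag the two places where it does not close. On the upper-bound side, once you have reduced to $K=1$ and $V$ non-central, the assertion that minimality of $q$ forces a Frobenius configuration (or else a usable proper quotient with $q(G/N)=q$) is not something you have actually argued; in Tomkinson's original proof this reduction is delicate and occupies most of the work. For instance, when $C_G(V)\ne V$ or when $p\mid|M|$, you note correctly that the Frobenius picture fails, but you have not exhibited the alternative minimal normal $N\ne V$ with $q(G/N)=q$, nor shown why a smaller multiply-complemented chief factor must appear. On the lower-bound side, the step ``the members not containing $N$ must realise a full conjugacy class of complements of a well-chosen chief factor'' is again asserted rather than proved; the passage from ``some coset $Ng$ is shredded among members of $\mathcal H$'' to a count of at least $q$ such members is exactly where Tomkinson's argument becomes technical (one needs to control which chief factor is being complemented and to use minimality of $|\mathcal H|$ carefully).

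In short: the skeleton is reasonable and in the spirit of Tomkinson's own argument, but both halves contain a genuine gap at precisely the point you identify as ``the main obstacle'' and ``the principal difficulty''. Filling those would essentially amount to reproducing the proof in \cite{tom}.
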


The following is a corollary of the main result of Bryce and Serena in \cite{serena}.

\begin{teor} \label{serena}
Let $G$ be a non-cyclic group. If $G$ admits a cover of size $\sigma(G)$ consisting of abelian subgroups then $G$ is solvable.
\end{teor}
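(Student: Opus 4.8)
The plan is to obtain Theorem~\ref{serena} from the main theorem of Bryce and Serena in \cite{serena} by a short reduction that isolates exactly the configuration their classification-based analysis rules out. Let $G$ be finite and non-cyclic, and fix a cover $\mathcal{H}=\{A_1,\dots,A_n\}$ of $G$ with $n=\sigma(G)$ and every $A_i$ abelian; since $\mathcal{H}$ has least possible size it is in particular irredundant. We argue by induction on $|G|$, the aim of the inductive step being to reduce to the statement that a finite group with trivial solvable radical has no minimal cover consisting of abelian subgroups.

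First I would reduce to the case that the solvable radical of $G$ is trivial. Assume it is not, and let $N$ be a non-trivial abelian normal subgroup of $G$, for instance the last non-trivial term of the derived series of the solvable radical. If $A_iN=G$ for some $i$, then $G/N\cong A_i/(A_i\cap N)$ is abelian, hence $G'\le N$ and $G''\le N'=1$, so $G$ is solvable. Otherwise every $A_iN$ is a proper subgroup of $G$, so the abelian subgroups $A_iN/N$ cover $G/N$ and $\sigma(G/N)\le n$; as a cover of a quotient lifts to a cover of $G$ we also have $\sigma(G/N)\ge\sigma(G)=n$, whence $\sigma(G/N)=n$ and $G/N$ has a minimal cover consisting of abelian subgroups. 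If $G/N$ is cyclic then $G$ is metabelian; if $G/N$ is non-cyclic then the inductive hypothesis gives that $G/N$ is solvable; either way $G$ is solvable. So we may now assume $G$ has trivial solvable radical, and in particular $Z(G)=1$ and $\mathrm{soc}(G)=F^{*}(G)$ is a direct product of non-abelian simple groups with $C_{G}(\mathrm{soc}(G))=1$.

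Next I would pin down the shape of this remaining case. Put $D=\bigcap_{i=1}^{n}A_i$. Since $D\le A_i$ and $A_i$ is abelian, $A_i\le C_G(D)$ for each $i$, so $G=\bigcup_i A_i\le C_G(D)$ and hence $D\le Z(G)=1$; thus the members of $\mathcal{H}$ meet only in the identity. Counting non-identity elements gives $\sum_i(|A_i|-1)\ge|G|-1$, so some $A=A_i$ satisfies $|G:A|<n=\sigma(G)$. Moreover $\mathrm{Core}_G(A)$ is an abelian normal subgroup of $G$, hence trivial, so $G$ acts faithfully and transitively on the fewer than $\sigma(G)$ cosets of $A$ with abelian point stabiliser; equivalently $G$ embeds into $\mathrm{Sym}(m)$ for some $m<\sigma(G)$. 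The theorem is thus reduced to showing that no finite group with trivial solvable radical has a minimal cover consisting of abelian subgroups.

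The hard part is this last claim, and it is precisely the content of the main theorem of \cite{serena}, where the classification of finite simple groups is indispensable; I would simply quote it. The scheme there is to pass, via a simple direct factor $T$ of $\mathrm{soc}(G)$, to the almost-simple section $N_G(T)/C_G(T)$ --- which still has a cover by at most $n$ abelian subgroups, namely the images of the $A_i\cap N_G(T)$, these being proper since the section is not abelian --- and then to verify, family by family through the list of simple groups, that any sufficiently economical cover of an almost-simple group is forced to involve a non-abelian subgroup (a maximal parabolic, the normaliser of a maximal torus, or an intransitive or imprimitive subgroup of symmetric type, according to the case), so that its abelian covering number strictly exceeds $\sigma$. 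Granting this, the configuration reached above cannot occur, so a group possessing a minimal cover by abelian subgroups must have a non-trivial abelian normal subgroup; the first reduction then carries the induction through and $G$ is solvable. The only serious input is the simple-group analysis, which is the substance of \cite{serena}; everything before it is the elementary reduction described here.
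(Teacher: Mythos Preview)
The paper does not supply its own proof of this theorem; it merely records the statement as a corollary of the main result of Bryce and Serena \cite{serena}. Your proposal is entirely consistent with this: you provide the (correct) elementary reduction to the case of trivial solvable radical and then, exactly as the paper does implicitly, defer the remaining classification-dependent analysis to \cite{serena}.
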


\section{Proof of Theorem \ref{main}}

We start with some lemmas. Recall that a ``maximal cyclic subgroup'' of $G$ is a cyclic subgroup of $G$ which is not properly contained in any other cyclic subgroup of $G$. Define $\lambda(G)$ to be the maximum size of an irredundant cover of $G$.

\begin{lemma} \label{osclemma}
Let $\mathcal{C}$ be the family of all maximal cyclic subgroups of a finite non-cyclic group $G$. Then
\begin{enumerate}
\item $\mathcal{C}$ is an irredundant cover of $G$.
\item For any irredundant cover $\mathcal{H}$ of $G$, $|\mathcal{H}| \leq |\mathcal{C}|$.
\item Let $\mathcal{H}$ be an irredundant cover of $G$. Then $|\mathcal{H}| = |\mathcal{C}|$ if and only if each $H \in \mathcal{H}$ contains exactly one maximal cyclic subgroup of $G$.
\end{enumerate}
In particular $\lambda(G)$ equals the number of maximal cyclic subgroups of $G$.

\

Now assume that $G$ admits only one-sized covers. Then the following hold.
\begin{enumerate}
\setcounter{enumi}{3}
\item $\lambda(G) = \sigma(G)$.
\item If $N \unlhd G$ and $G/N$ is non-cyclic then $\sigma(G) = \sigma(G/N)$ and $G/N$ has precisely $\sigma(G)$ maximal cyclic subgroups.
\item If $C_1,C_2$ are two distinct maximal cyclic subgroups of $G$, $\langle C_1 \cup C_2 \rangle = G$.
\end{enumerate}
\end{lemma}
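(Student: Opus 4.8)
The plan is to treat the six claims in the order given, since each later one feeds on the earlier ones.

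For part (1): every element of $G$ lies in a cyclic subgroup (the one it generates), which is contained in at least one maximal cyclic subgroup (finiteness), so $\mathcal{C}$ covers $G$; these are proper subgroups since $G$ is non-cyclic. Irredundancy is the key point: a generator of a maximal cyclic subgroup $C$ lies in no other member of $\mathcal{C}$, for if $x$ generates $C$ and $x\in D$ with $D$ cyclic maximal, then $C=\langle x\rangle\le D$ forces $C=D$ by maximality of $C$. Hence removing any $C$ from $\mathcal{C}$ loses its generators, so no proper subfamily covers.

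For parts (2) and (3): given an irredundant cover $\mathcal{H}$, irredundancy supplies for each $H\in\mathcal{H}$ an element $x_H$ lying in $H$ and in no other member of $\mathcal{H}$; pick a maximal cyclic subgroup $C_H$ containing $\langle x_H\rangle$. I would show $H\mapsto C_H$ is injective: if $C_H=C_{H'}$ then a generator $g$ of this common maximal cyclic subgroup lies in some $K\in\mathcal{H}$, and since $x_H,x_{H'}\in C_H=\langle g\rangle\le K$ we get (by the uniqueness property of $x_H$) $H=K=H'$. This gives $|\mathcal{H}|\le|\mathcal{C}|$, proving (2) and recovering $\lambda(G)=|\mathcal{C}|$. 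For (3): the map $H\mapsto C_H$ above is then a bijection when $|\mathcal{H}|=|\mathcal{C}|$, and one checks it does not depend on the choices — indeed $H$ must contain \emph{at least} one maximal cyclic subgroup (it is a union of its cyclic subgroups, hence of maximal ones), and if $H$ contained two distinct maximal cyclic subgroups $C,C'$, then each of $C,C'$ would be the $C$-value of at most one member of $\mathcal{H}$ while supplying a ``private'' generator to $H$; counting maximal cyclic subgroups against members of $\mathcal{H}$ via the bijection forces each $H$ to absorb exactly one. Conversely, if each $H\in\mathcal{H}$ contains exactly one maximal cyclic subgroup $C_H$, distinct $H$ give distinct $C_H$ (a shared one would be the unique maximal cyclic subgroup of both, but then all of its generators lie in both $H$'s, and since $\mathcal{C}\subseteq\bigcup_{H}\{$maximal cyclic subgroups inside $H\}$ one shows $\mathcal{H}$ already was not irredundant unless $|\mathcal{H}|=|\mathcal{C}|$) and every maximal cyclic subgroup of $G$ arises, giving the reverse inequality. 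The main obstacle is bookkeeping the ``exactly one'' direction cleanly; the cleanest route is to note $\mathcal{C}$ refines $\mathcal{H}$ in the sense that each $C\in\mathcal{C}$ lies in some $H\in\mathcal{H}$, and to run a counting/injectivity argument on the incidence between $\mathcal{C}$ and $\mathcal{H}$.

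For parts (4)--(6), assume $G$ admits only one-sized covers. Part (4): $\mathcal{C}$ is an irredundant cover by (1), hence $\lambda(G)=|\mathcal{C}|=\sigma(G)$ since all irredundant covers have the common size $\sigma(G)$. Part (5): if $N\unlhd G$ with $G/N$ non-cyclic, then (as recalled in the introduction) $G/N$ admits only one-sized covers, so $\sigma(G/N)=\lambda(G/N)=$ number of maximal cyclic subgroups of $G/N$; lifting an irredundant cover of $G/N$ gives an irredundant cover of $G$, which has size $\sigma(G)$, so $\sigma(G/N)=\sigma(G)$, and then by (4) applied to $G/N$ the number of maximal cyclic subgroups of $G/N$ equals $\sigma(G/N)=\sigma(G)$. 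Part (6): if $C_1\ne C_2$ are maximal cyclic and $M=\langle C_1\cup C_2\rangle\ne G$, then $M$ is a proper subgroup; consider the family $\mathcal{C}'=\{M\}\cup\{C\in\mathcal{C}:C\not\le M\}$. It is a cover (every element of $G$ is in some $C\in\mathcal{C}$, and those inside $M$ are absorbed by $M$), and it is irredundant — each surviving $C$ keeps a private generator, and $M$ keeps the generator of, say, $C_1$, which lies in no maximal cyclic subgroup other than $C_1$ and hence in no surviving $C$. But $|\mathcal{C}'|<|\mathcal{C}|=\lambda(G)=\sigma(G)$ since we removed at least $C_1,C_2$ and added one set, contradicting that all irredundant covers have size $\sigma(G)$. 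Hence $M=G$. The one subtlety here is checking irredundancy of $\mathcal{C}'$: one must verify the generator chosen for $M$ is genuinely not in any surviving $C$, which follows because a generator of $C_1$ lies in a unique maximal cyclic subgroup.
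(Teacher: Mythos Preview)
Your proof is essentially correct and follows the same overall strategy as the paper, with two minor points worth flagging.

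First, in (3) your parenthetical justification that each $H\in\mathcal{H}$ contains at least one maximal cyclic subgroup --- ``it is a union of its cyclic subgroups, hence of maximal ones'' --- is not valid: the cyclic subgroups of $H$ need not be maximal cyclic in $G$. The correct reason is already implicit in your injectivity argument for (2): the generator $g$ of $C_H$ lies in some $K\in\mathcal{H}$, and since $x_H\in\langle g\rangle\le K$ forces $K=H$, you get $C_H\le H$. For the ``at most one'' half you in effect need the \emph{surjection} $\mathcal{C}\to\mathcal{H}$ (each maximal cyclic subgroup lies in some member of $\mathcal{H}$), which is the map the paper uses for (2); once $|\mathcal{H}|=|\mathcal{C}|$ this surjection is a bijection, and the freedom to send two maximal cyclic subgroups contained in the same $H_0$ both to $H_0$ breaks injectivity. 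Your closing remark about running a counting argument on the incidence between $\mathcal{C}$ and $\mathcal{H}$ is exactly this, so the idea is there even if the write-up is loose.

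Second, in (6) the irredundancy check on $\mathcal{C}'$ is unnecessary: any cover of size strictly less than $\sigma(G)$ is already a contradiction by the definition of $\sigma$. The paper simply exhibits the cover $\{\langle C_1\cup C_2\rangle, C_3,\ldots,C_k\}$ of size $k-1$ and stops. The remaining differences --- your injection $\mathcal{H}\to\mathcal{C}$ in (2) versus the paper's surjection $\mathcal{C}\to\mathcal{H}$, and your lifting argument in (5) versus the paper's projection of $\mathcal{C}$ down to $G/N$ --- are equivalent dual formulations of the same idea.
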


\begin{proof}
(1) is clear. To prove (2) let $\mathcal{H}$ be an irredundant cover of $G$, call $k := |\mathcal{C}|$ and let $x_1,\ldots,x_k$ be fixed generators of the distinct maximal cyclic subgroups of $G$. Since $\mathcal{H}$ is a cover of $G$, for $i = 1,\ldots,k$ we can choose $H_i \in \mathcal{H}$ such that $x_i \in H_i$ (where it can happen that $H_i = H_j$ for $i \neq j$). Since any element of $G$ is contained in a maximal cyclic subgroup of $G$, any element of $G$ is a power of some $x_i$, hence $\{H_1,\ldots,H_k\}$ is a cover of $G$ contained in $\mathcal{H}$. Since $\mathcal{H}$ is an irredundant cover $\mathcal{H} = \{H_1,\ldots,H_k\}$ hence $|\mathcal{H}| \leq k = |\mathcal{C}|$. This proves (2). If equality holds, i.e. $|\mathcal{H}| = k$, then the $H_i$'s are pairwise distinct hence each $H_i$ contains exactly one maximal cyclic subgroup of $G$. This proves (3).
(4) follows from the fact that covers of minimal size are clearly irredundant, hence they have size $\lambda(G)$ as $G$ admits only one-sized covers.
We prove (5). Since $G/N$ is non-cyclic, the cover $\mathcal{C}$ of $G$ consisting of the maximal cyclic subgroups maps via the projection $G \to G/N$ to a cover of $G/N$ of the same size consisting of cyclic subgroups. Hence if $G$ admits only one-sized covers then $\sigma(G) = |\mathcal{C}| \geq \sigma(G/N)$. Since $\sigma(G) \leq \sigma(G/N)$ we get equality: $\sigma(G) = \sigma(G/N)$. Since $G$ admits only one-sized covers, so does $G/N$ hence by (1) it has $\sigma(G/N) = \sigma(G)$ maximal cyclic subgroups.
We prove (6). Let $C_1,C_2,\ldots,C_k$ be the maximal cyclic subgroups of $G$. If $\langle C_1 \cup C_2 \rangle \neq G$ then $\{\langle C_1 \cup C_2 \rangle,C_3,\ldots,C_k\}$ would be a cover of $G$ of size $k-1 = \lambda(G)-1$, contradicting (4).
\end{proof}

\begin{lemma} \label{pnilp}
Let $G$ be a finite solvable group and let $p$ be a prime divisor of $|G|$. Assume that all complemented chief factors of $G$ which are $p$-groups are central. Then $G$ is $p$-nilpotent.
\end{lemma}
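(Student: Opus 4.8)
The plan is to argue by induction on $|G|$, i.e. to consider a counterexample $G$ of minimal order and derive a contradiction. If $p\nmid|G|$ then $G$ is trivially $p$-nilpotent, so we may assume $p$ divides $|G|$, and we carry out two reductions. \emph{First reduction: $O_{p'}(G)=1$.} For $N\unlhd G$ the chief factors of $G/N$ are exactly the chief factors $H/K$ of $G$ with $N\le K$, and for such a factor the properties of being a $p$-group, of being complemented, and of being central are all determined inside $G/K$; hence $G/N$ inherits the hypothesis of the lemma. If $O_{p'}(G)\ne1$, minimality forces $G/O_{p'}(G)$ to be $p$-nilpotent, and since $O_{p'}\big(G/O_{p'}(G)\big)=1$ a $p$-nilpotent group whose $O_{p'}$ is trivial must be a $p$-group, so $O_{p'}(G)$ is a normal $p$-complement of $G$ --- a contradiction. \emph{Second reduction: $\Phi(G)=1$.} With $O_{p'}(G)=1$ one has $F(G)=O_p(G)\ne1$, and $\Phi(G)$ is a $p$-group (it is nilpotent with trivial Hall $p'$-part, using $\Phi(G)_{p'}\le O_{p'}(G)=1$). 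If $\Phi(G)\ne1$, minimality gives a normal $p$-complement $K/\Phi(G)$ of $G/\Phi(G)$; since $\Phi(G)$ is then a normal Sylow $p$-subgroup of $K$, Schur--Zassenhaus produces a Hall $p'$-subgroup $L$ of $K$, all such being $K$-conjugate, and a Frattini argument ($G=K\,N_G(L)=\Phi(G)\,N_G(L)$, combined with $\Phi(G)$ consisting of non-generators) shows $L\unlhd G$; then $L$ is a normal $p$-complement of $G$ --- a contradiction.

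So in the minimal counterexample $O_{p'}(G)=\Phi(G)=1$. Now I would invoke the standard structure theory of solvable groups with trivial Frattini subgroup: $\Phi(F(G))\le\Phi(G)=1$ forces $F(G)$ to be abelian, $C_G(F(G))\le F(G)$ then gives $C_G(F(G))=F(G)$, and $F(G)=\mathrm{Soc}(G)$. Writing $P:=F(G)=O_p(G)$, this is a nontrivial elementary abelian $p$-group which is self-centralizing and equals the product of the minimal normal subgroups of $G$, all of which are $p$-groups. If every minimal normal subgroup of $G$ were central, then $P=\mathrm{Soc}(G)\le Z(G)$, whence $G=C_G(P)=P$ is a $p$-group and in particular $p$-nilpotent, a contradiction; so there is a minimal normal subgroup $V\le P$ with $[G,V]\ne1$. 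Finally, since $V$ is a minimal normal subgroup not contained in $\Phi(G)=1$, there is a maximal subgroup $M$ of $G$ with $V\not\le M$; then $VM=G$ and, as $V$ is abelian and normal, $V\cap M\unlhd VM=G$, so $V\cap M=1$ by minimality of $V$, and $M$ complements the chief factor $V$. Thus $V$ is a complemented $p$-chief factor of $G$ that is not central, contradicting the hypothesis, and the induction is complete.

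The routine but slightly delicate point is the second reduction --- transporting $p$-nilpotency from $G/\Phi(G)$ back to $G$ --- which is where the Schur--Zassenhaus splitting and the Frattini (non-generator) argument are needed; the remainder rests on the familiar facts $\Phi(N)\le\Phi(G)$ for $N\unlhd G$, $C_G(F(G))\le F(G)$ for solvable $G$, and $\mathrm{Soc}(G)=F(G)$ when $\Phi(G)=1$. The guiding idea is that, once $O_{p'}(G)$ and $\Phi(G)$ have been stripped away, a non-central minimal normal $p$-subgroup is automatically a complemented chief factor, so the hypothesis leaves no room for any non-central $p$-chief factor and the whole group collapses onto $O_p(G)$.
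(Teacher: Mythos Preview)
Your proof is correct but follows a genuinely different route from the paper's. The paper argues more directly: pick a single minimal normal subgroup $N$, observe by induction that $G/N$ is $p$-nilpotent, and then split into three cases --- $N\le\Phi(G)$, $N$ a complemented $p$-group, or $N$ a complemented $p'$-group --- each of which immediately yields $p$-nilpotency of $G$ (in the second case $N$ is central by hypothesis so $G\cong N\times H$, and in the third the preimage of the normal $p$-complement of $G/N$ works). The Frattini case is dispatched by citing as a black box the fact that the class of $p$-nilpotent groups is closed under Frattini extensions (Doerk--Hawkes, Chapter~A, Lemma~13.2). Your approach instead performs two global reductions, stripping away $O_{p'}(G)$ and then $\Phi(G)$, and afterwards uses the structure theory of solvable groups with trivial Frattini subgroup ($F(G)=\mathrm{Soc}(G)=O_p(G)$ elementary abelian and self-centralizing) to locate a non-central complemented $p$-chief factor directly at the bottom. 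The main difference in content is that your second reduction \emph{proves} the Frattini-lifting of $p$-nilpotency from scratch via Schur--Zassenhaus and the non-generator argument, making your proof self-contained at the cost of more structural machinery; the paper's version is shorter and more elementary in its case analysis but leans on an external citation for exactly that step.
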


\begin{proof}
By induction on $|G|$. Let $N$ be a minimal normal subgroup of $G$. Then $G/N$ is $p$-nilpotent by induction. Let $\Phi(G)$ denote the Frattini subgroup of $G$. If $N \subseteq \Phi(G)$ then $G$ is $p$-nilpotent by \cite{doerkhawkes}, Lemma 13.2 in Chapter A, so now assume $N$ is non-Frattini, i.e. $N$ is complemented. If $N$ is a $p$-group then by hypothesis $N$ is central hence $G = H \times N$ with $H$ $p$-nilpotent, hence $G$ is $p$-nilpotent. Now assume $N$ is not a $p$-group. Let $K/N$ be a normal $p$-complement of $G/N$. Since $p$ does not divide $|N|$, $p$ does not divide $|K|$ hence $K$ is a normal $p$-complement of $G$ thus $G$ is $p$-nilpotent.
\end{proof}

\begin{lemma} \label{cycfrob}
Let $G$ be a group with a maximal core-free cyclic subgroup $H$ and a normal subgroup $N$ complemented by $H$. Then $N$ with the conjugates of $H$ form an irredundant cover of $G$ of size $|N|+1$.
\end{lemma}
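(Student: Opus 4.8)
The plan is to show that the hypotheses force $G$ to be a Frobenius group with kernel $N$ and complement $H$, and then to recognise $\{N\}\cup\{H^g:g\in G\}$ as the standard cover furnished by the Frobenius partition. Write $G=N\rtimes H$; since $H$ is a proper (maximal) subgroup of $G$ we have $N\neq 1$, and every subgroup in the list is proper.

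I would first record three structural facts. \emph{(i) $N$ is a minimal normal subgroup of $G$.} If $1\neq M\unlhd G$ with $M\leq N$, then $M\not\leq H$ (since $M\cap H\leq N\cap H=1$), so $MH$ is a subgroup strictly larger than $H$; by maximality $MH=G$, whence $|M|=[G:H]=|N|$ and $M=N$. \emph{(ii) $H$ acts faithfully on $N$.} The kernel $C_H(N)$ is centralised by $N$ and normal in the abelian group $H$, hence normal in $NH=G$; lying inside $H$, it is contained in $\mathrm{core}_G(H)=1$. \emph{(iii) The action is fixed-point-free}, i.e.\ $C_N(h)=1$ whenever $1\neq h\in H$. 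Since $H$ is abelian, $C_N(h)$ is $H$-invariant, and since $N$ is abelian (automatic when $G$ is solvable, the relevant case) it is also normal in $N$, hence normal in $G$; by (i) it is $1$ or $N$, and $C_N(h)=N$ would force $h\in C_H(N)=1$. By (i)--(iii), $G$ is a Frobenius group with kernel $N$ and complement $H$.

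Granting this, the cover is read off as follows. A Frobenius complement is self-normalising: if $g=nh$ normalises $H$ then so does $n=gh^{-1}$, and then $n^{-1}h'n\in H\cap h'N=\{h'\}$ for every $h'\in H$, giving $n\in\bigcap_{h'\neq 1}C_N(h')=1$; thus $N_G(H)=H$ and $H$ has exactly $[G:H]=|N|$ conjugates, so $\{N\}\cup\{H^g\}$ has $|N|+1$ members. Distinct conjugates of $H$ meet trivially: if $1\neq x\in H^g\cap H^{g'}$, put $a:=x^{g^{-1}}\in H\setminus\{1\}$ and $c:=gg'^{-1}$, so $a^c\in H$; writing $c=n'h'$ with $n'\in N$, $h'\in H$, from $a^c=(a^{n'})^{h'}\in H$ we get $a^{n'}\in H\cap aN=\{a\}$, hence $n'\in C_N(a)=1$ and $c=h'\in H$, i.e.\ $H^g=H^{g'}$. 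Therefore $\bigcup_g H^g$ has $1+|N|(|H|-1)=|G|-|N|+1$ elements and meets $N$ only in $1$; adding the $|N|$ elements of $N$ exhausts $G$, so the family is a cover.

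Finally, irredundancy: a generator $x$ of the cyclic group $H^g$ lies in no other member, being outside $N$ (as $H^g\cap N=(H\cap N)^g=1$ and $x\neq 1$) and outside every $H^{g'}\neq H^g$ (since distinct conjugates meet trivially); so each $H^g$ is essential. And $N$ is essential because $N\setminus\{1\}\neq\emptyset$ while $N\cap H^g=1$ for all $g$, so no nontrivial element of $N$ is covered by the conjugates of $H$. Thus the family is an irredundant cover of size $|N|+1$, as claimed. The step I expect to be the genuine obstacle is fact (iii): squeezing fixed-point-freeness of the $H$-action on $N$ out of the hypotheses --- this is where maximality of $H$ (through minimality of $N$) and core-freeness of $H$ (through faithfulness of the action) are both indispensable.
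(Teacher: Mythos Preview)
Your argument establishes the result when $G$ is solvable, but as you yourself flag, step (iii) relies on $N$ being abelian---a hypothesis absent from the lemma as stated. You justify this by appealing to ``the relevant case'' of solvable $G$, and indeed every application of the lemma in the paper is to a solvable group; nonetheless the lemma is stated without that restriction, and your proof does not cover it. (One can in fact show $N$ is always abelian under the hypotheses: once one knows the conjugates of $H$ have the trivial-intersection property, $G$ is Frobenius with complement $H$, so by Thompson the kernel $N$ is nilpotent, and being minimal normal it is then elementary abelian. But invoking Thompson is far heavier than the lemma warrants, and is circular from your point of view since you need abelianness of $N$ precisely to obtain the trivial-intersection property.)

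The paper's route is both shorter and free of this issue. Rather than going through the Frobenius structure via a fixed-point-free action, it exploits the cyclicity of $H$ directly: for distinct conjugates $H$ and $K=H^g$, the intersection $H\cap K$ is normal in $H$ and in $K$ (every subgroup of a cyclic group is normal), hence normal in $\langle H,K\rangle=G$ (by maximality of $H$ and $K$), hence contained in $\mathrm{core}_G(H)=1$. This single observation replaces your facts (i)--(iii) together with the subsequent self-normalising and trivial-intersection computations; the element count and irredundancy then proceed exactly as in your write-up. The key idea you are missing, then, is to use cyclicity of $H$ to get normality of $H\cap H^g$ in both conjugates, rather than to analyse the action of $H$ on $N$.
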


\begin{proof}
If $K$ is a conjugate of $H$ different from $H$ then $H \cap K$ is normal in $H$ and $K$ because $H$ and $K$ are cyclic, and $\langle H,K \rangle = G$ because $H,K$ are maximal subgroups of $G$. Therefore $H \cap K \unlhd G$. Since $H$ is core-free, $H \cap K = \{1\}$. It follows that $N$ with the conjugates of $H$ cover exactly $$|N| + (|H|-1)|N| = |N||H| = |G|$$ elements, hence they form a cover $\mathcal{H}$ of $G$ which is irredundant because any two members of $\mathcal{H}$ intersect in $\{1\}$.
\end{proof}

Now we proceed with the proof of Theorem \ref{main}.

\

Let $G$ admit only one-sized covers. Let $\mathcal{C}$ be the family of maximal cyclic subgroups of $G$. From Lemma \ref{osclemma} it follows that $|\mathcal{C}| = \sigma(G)$. It follows by Theorem \ref{serena} that $G$ is solvable. We prove that $G$ is supersolvable. By \cite{rob} 5.4.7 and 9.4.4 a finite group is supersolvable if and only if all of its maximal subgroups have prime index. Let $M$ be a maximal subgroup of $G$, and let $|G:M| = q = p^k$ a prime power. We need to show that $q$ is a prime, i.e. $k=1$. Let $X := G/M_G$ where $M_G$ is the normal core of $M$ in $G$. $X$ is a primitive solvable group of degree $q$, hence $X = V \rtimes H$ with $|V|=q$ and $H$ an irreducible subgroup of $GL(V)$. If $H = \{1\}$ then $q = |V|$ is a prime. Now assume $H \neq \{1\}$, so that $X$ is non-nilpotent. Since $G$ admits only one-sized covers, so does $X$ by Lemma \ref{osclemma} (5). Let $\ell$ be a prime dividing $|H|$ but not $|V|$. Let $h \in H$ of order $\ell$ and let $\langle x \rangle$ a maximal cyclic subgroup of $X$ containing $h$. Note that if $v \in V$ normalizes $\langle h \rangle$ then $[v,h] = vhv^{-1}h^{-1} \in H \cap V = \{1\}$ hence $v$ centralizes $h$. Since $C_H(V) = \{1\}$ it follows that there exists $v \in V$ with $\langle h \rangle^v \neq \langle h \rangle$. In particular $\langle x \rangle \neq \langle x^v \rangle$ hence $\langle x \rangle$ and $\langle x^v \rangle$ are two distinct maximal cyclic subgroups of $X$. Therefore by Lemma \ref{osclemma} (6) since $X$ admits only one-sized covers $X = \langle x,x^v \rangle \subseteq V \langle x \rangle \subseteq X$ hence $X = V \langle x \rangle$ and hence $H$ is cyclic (actually $H = \langle x \rangle$). By Lemma \ref{cycfrob} it follows that $X$ has an irredundant cover of size $q+1$, hence $\lambda(X) = \sigma(X) = q+1$. If $V$ was not cyclic then writing $V$ as union of cyclic subgroups intersecting in the identity would give, with the $q$ conjugates of $H$, an irredundant cover of $X$ of size larger than $q+1$. Hence $V$ is cyclic, i.e. $q = p$.

We deal with the nilpotent case and with the supersolvable non-nilpotent case separately.

First, let $G$ be nilpotent. Let $P$ be a Sylow $p$-subgroup of $G$. Then $P$ is an epimorphic image of $G$ hence if $P$ is non-cyclic, by Lemma \ref{osclemma} (5) $\sigma(G) = \sigma(P) = p+1$. This means that exactly one of the Sylow subgroups of $G$ is non-cyclic, hence $G$ has the form $P \times H$ with $H$ a cyclic Hall direct factor and $P$ a Sylow $p$-subgroup. Hence we may assume that $G = P$, i.e. $G$ is a $p$-group. We show that all maximal subgroups of $G$ are cyclic. Let $M_1,\ldots,M_k$ be the maximal subgroups of $G$. By Lemma \ref{osclemma}(6) $G$ is $2$-generated hence $G/\Phi(G) \cong C_p \times C_p$, so that $k = p+1$. Since $G$ has exactly $p+1$ maximal cyclic subgroups, we are left to show that all maximal cyclic subgroups of $G$ are maximal subgroups. By Lemma \ref{osclemma} (3) each $M_i$ contains exactly one maximal cyclic subgroup of $G$. Let $C$ be a maximal cyclic subgroup of $G$. Without loss of generality $C \subseteq M_1$. Then $C \cup M_2 \cup \ldots \cup M_k = G$ hence setting $c := |C|$ since $|\Phi(G)| = p^{n-2}$ we must have $$p(p^{n-1}-p^{n-2})+p^{n-2} + c > p^n,$$and this implies $c > p^{n-2}(p-1)$. Since $c$ is a power of $p$ we deduce $c = p^{n-1}$, i.e. $C$ is a maximal subgroup of $G$. This proves that all maximal subgroups of $G$ are cyclic. Let now $x$ be a central element of $G$ of order $p$. If $G$ has an element $y$ of order $p$ which is not a power of $x$ then $G \geq \langle x \rangle \langle y \rangle \cong C_p \times C_p$. If $\langle x \rangle \langle y \rangle$ equals $G$ then $G \cong C_p \times C_p$. Suppose this is not the case. Then $G$ has a proper non-cyclic subgroup $\langle x \rangle \langle y \rangle$, and this contradicts the fact that all maximal subgroups of $G$ are cyclic. Thus $\langle x \rangle$ is the unique subgroup of $G$ of order $p$, hence by \cite{rob} 5.3.6 $G$ is a generalized quaternion group. Let $N$ be the unique subgroup of $G$ of order $2$. Then $G/N$ is both a dihedral $2$-group and a $2$-group which admits only one-sized covers, hence by what we have proved either $G \cong C_2 \times C_2$ or $G/N$ is a generalized quaternion group. Therefore $G/N \cong C_2 \times C_2$, i.e. $G \cong Q_8$.

Let now $G$ be supersolvable and non-nilpotent. Since $G$ is non-nilpotent there exists a non-central complemented chief factor $C$ of $G$, with $|C|=p$. Write $C = H/K$ and let $A/K$ be a complement of $H/K$ in $G/K$. Let $X := (G/K)/(C_{A/K}(H/K))$. Then $X$ is a primitive group and $$(H/K)(C_{A/K}(H/K))/(C_{A/K}(H/K)) \cong C$$ is its unique minimal normal subgroup (which we will call again $C$), which is complemented by $(A/K)/(C_{A/K}(H/K))$. Since $C$ has prime order, the complements of $C$ in $X$ are cyclic. By Lemma \ref{osclemma}(5) and Lemma \ref{cycfrob} it follows that $\sigma(G) = \sigma(X) = p+1$. Therefore all non-central complemented chief factors of $G$ have the same order $p$. It follows that if a complemented chief factor of $G$ has order $q \neq p$ then it is central. By Lemma \ref{pnilp} it follows that $G$ is $q$-nilpotent for all prime divisors $q$ of $|G|$ different from $p$. The intersection of the normal $q$-complements for $q \neq p$ is then equal to the unique Sylow $p$-subgroup of $G$, call it $P$. By the Schur-Zassenhaus theorem there is a complement $H$ of $P$, hence $G = P \rtimes H$.

If $H$ was not cyclic then $p+1 = \sigma(G) = \sigma(H)$, contradicting Theorem \ref{tom} because $H$ is solvable and $|H|$ is coprime to $p$. Hence $H = \langle a \rangle$ is cyclic. Since $G$ is non-nilpotent $H$ does not centralize $P$. Let $F := \Phi(P)$. By \cite{rob} 5.3.3 $H$ does not centralize $P/F$. Hence $G/F$ admits a non-cyclic quotient of the form $C_p \rtimes C_n$, being $n$ a positive integer, and by Lemma \ref{osclemma} (5) and Lemma \ref{cycfrob} $\sigma(G) = \sigma(G/F) = p+1$. Write $P/F = C_p^t$. The cyclic subgroups of $G/F$ have order at most $pn$, on the other hand $G/F$ is the union of $p+1$ cyclic subgroups hence $(p+1)(pn-1)+1 \geq p^t n$ and we deduce $t \leq 2$. Suppose $t=2$. $G/F$ must have central elements of order $p$ otherwise the cyclic subgroups would have order $n$ or $p$ and they couldn't cover $G/F$; but then $G/F \cong C_p \times H$ with $H = C_p \rtimes C_n$ non-abelian: this group has more than $p+1$ maximal cyclic subgroups. Therefore $t=1$, i.e. $P$ is cyclic, say $P \cong C_{p^k}$.

We are left to prove that $k=1$. Up to quotienting out by $C_H(P)$ we may assume that $C_H(P) = \{1\}$ hence $P$ is a maximal cyclic subgroup of $G$. Let $A := C_P(H)$. Then $AH$ is a maximal cyclic subgroup of $G$ and it has at least $p$ conjugates because $AH$ is not normal in $G$ being $H$ not normal in $G$. Also, every conjugate of $AH$ contains $A$. Since $G$ has exactly $p+1$ maximal cyclic subgroups which form a cover we must then have $$p(|AH|-|A|)+|P| \geq |G| = |P||H|,$$and since $A \neq P$ we deduce $|P:A|=p$. We must show that $A = \{1\}$ (this is equivalent to $k=1$). $H = \langle a \rangle$ acts on $P = \langle x \rangle$ by raising $x$ to a power $l$. Since $H$ centralizes $A = \langle x^p \rangle$ we have $x^p = a^{-1}x^pa = x^{pl}$ hence $pl \equiv p \mod p^k$, i.e. $p^{k-1}$ divides $l-1$, in particular $l$ is coprime to $p$. On the other hand since $H$ acts faithfully (because $C_H(P) = \{1\}$) $l$ divides $\varphi(p^k) = p^{k-1}(p-1)$, so since $(l,p)=1$, $l$ divides $p-1$. But then $p^{k-1} \leq l \leq p-1$ which forces $k=1$.

\end{document}